\definecolor{halfgray}{gray}{0.55} 
\definecolor{webgreen}{rgb}{0,0.5,0}
\definecolor{webbrown}{rgb}{.6,0,0} \hypersetup{%
\newtheorem{theorem}{Theorem}[section]
\newtheorem{corollary}[theorem]{Corollary}
\newtheorem{proposition}[theorem]{Proposition}
\theoremstyle{definition}
\newtheorem{remark}[theorem]{Remark}
\def\R{\mathbb{R}}
\def\N{\mathbb{N}}
\def\Reg{\mathcal{R}^{\mu}}
\newcommand{\norm}[1]{{\left\lVert \, #1 \, \right\rVert}}
\def\Id{\text{Id}}
\begin{document}

\title
[A remark on the invertibility of semi-invertible cocycles]
{A remark on the invertibility of semi-invertible cocycles}

\author{Lucas Backes}

\address{Departamento de Matem\'atica, Universidade Federal do Rio Grande do Sul, Av. Bento Gon\c{c}alves 9500, CEP 91509-900, Porto Alegre, RS, Brazil.}
\email{lhbackes@impa.br }

\date{\today}

\keywords{Semi-invertible cocycles, Lyapunov exponents, invertibility}
\subjclass[2010]{Primary: 37H15, 37A20; Secondary: 37D25}

\begin{abstract}
We observe that under certain conditions on the Lyapunov exponents a semi-invertible cocycle is, indeed, invertible. As a consequence, if a semi-invertible cocycle generated by a H\"{o}lder continuous map $A:M\to M(d, \mathbb{R})$ over a hyperbolic map $f:M\to M$ satisfies a Liv\v{s}ic's type condition, that is, if $A(f^{n-1}(p))\cdot\ldots \cdot A(f(p))A(p)=\Id$ for every $p\in \text{Fix}(f^n)$ then the cocycle is invertible, meaning that $A(x)\in GL(d,\mathbb{R})$ for every $x\in M$, and a Liv\v{s}ic's type theorem is satisfied.
\end{abstract}

\maketitle

\section{Introduction}

Linear cocycles are classical objects in the fields of Dynamical Systems and Ergodic Theory. As a simple example we can cite the derivative of a smooth map. This example also reveals the importance of these objects: it is, for instance, in the core of the study of hyperbolic dynamics and its variations. Nevertheless, the notion of linear cocycle is much broader than that and includes, for instance, stochastic processes and random matrices and arises naturally in many other contexts like in the spectral theory of Schr\"odinger operators.

In the present work we are interested in a particular type of linear cocycles, namely, the semi-invertible ones. Given a homeomorphism $f:M\to M$ acting on a compact metric space $(M,d)$ and a measurable matrix-valued map $A:M\rightarrow M(d, \mathbb{R})$, the pair $(A,f)$ is called a \textit{semi-invertible linear cocycle}. Sometimes one calls semi-invertible linear cocycle (over $f$ generated by $A$), instead, the sequence $\lbrace A^n\rbrace _{n\in \mathbb{N}}$ defined by
\begin{equation*}\label{def:cocycles}
A^n(x)=
\left\{
	\begin{array}{ll}
		A(f^{n-1}(x))\ldots A(f(x))A(x)  & \mbox{if } n>0 \\
		Id & \mbox{if } n=0 \\
	\end{array}
\right.
\end{equation*}
for all $x\in M$. The word `semi-invertible' refers to the fact that the action of the underlying dynamical system $f$ is invertible while the action on the fibers given by $A$ may fail to be invertible. Whenever the map $A$ takes values on $GL(d,\R)$, that is, $A:M \to GL(d,\R)$, we call the cocycle generated by $A$ over $f$ an \emph{invertible cocycle}.

The theories of these two classes of objects share many similarities but they also may exhibit very different behaviors. For instance, it was proved by Cao in \cite{Cao03} that if an invertible cocycle $(A,f)$ has only positive Lyapunov exponents with respect to every $f$-invariant probability measure, then it is a uniformly expanding cocycle. On the other hand, in \cite{Bac17} we exhibited an example showing that this is no longer true in the semi-invertible setting: there exists a semi-invertible cocycle $(A,f)$ whose Lyapunov exponents are all larger then a certain constant $c>0$ and a point $x\in M$ for which $A(x)\notin GL(d,\R)$ and, in particular, $(A,f)$ can not be uniformly expanding.

Bearing in mind the relation between these two theories, this note was mainly motivated by the following problem: it was proved by Kalinin \cite{Kal11} that whenever $A:M\to GL(d,\R)$ is a H\"older map and $f$ exhibits enough hyperbolicity, if 
\begin{equation}\label{eq: poc}
A^n(p)=\Id \text{ for every } p\in \text{Fix}(f^n) \text{ and } n\in\N,
\end{equation}
then $A$ is a \emph{coboundary}, that is, there exists a H\"older map $P:M\to GL(d,\R)$ satisfying
$$A(x)=P(f(x))P(x)^{-1} \text{ for every } x\in M.$$
So, a natural question that arises is what happens if instead of an invertible cocycle we have a \emph{semi-invertible} cocycle satisfying \eqref{eq: poc}. It turns out that we can get a similar conclusion (see Section \ref{sec: statements} for precise statements):
\begin{theorem}
Suppose $A:M\to M(d,\R)$ is a H\"older map and $f:M\to M$ exhibits enough hyperbolicity. If the cocycle $(A,f)$ satisfies \eqref{eq: poc} then $A(x)\in GL(d,\R)$ for every $x\in M$ and there exists a H\"older map $P:M\to GL(d,\R)$ satisfying
$$A(x)=P(f(x))P(x)^{-1} \text{ for every } x\in M.$$
\end{theorem}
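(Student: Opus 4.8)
The theorem combines two assertions: (1) invertibility of $A$ at every point, i.e. $A(x) \in GL(d,\R)$; and (2) the coboundary conclusion, which will follow from Kalinin's theorem once we know the cocycle is genuinely invertible. So the real work is establishing invertibility from the periodic-orbit condition \eqref{eq: poc}, and this is where the abstract criterion alluded to in the abstract ("under certain conditions on the Lyapunov exponents a semi-invertible cocycle is, indeed, invertible") must come in.

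Let me think about what Lyapunov-exponent condition we can extract from \eqref{eq: poc}. If $A^n(p) = \Id$ for every periodic point $p$ of period $n$, then along each periodic orbit the cocycle is invertible and all its Lyapunov exponents (on that orbit) are zero. The key fact — which should be the abstract ingredient proved earlier in the paper — is an approximation result in the spirit of Kalinin's work: for any $f$-invariant ergodic measure $\mu$, the Lyapunov exponents of $(A,f)$ with respect to $\mu$ can be approximated by Lyapunov exponents over periodic orbits (this uses the hyperbolicity of $f$, specifically the specification/closing property, together with the subadditive ergodic theorem and a semi-uniform subadditive argument, à la Kalinin's "Livšic theorem for matrix cocycles"). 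Hence \eqref{eq: poc} forces all Lyapunov exponents of $(A,f)$ with respect to every invariant measure to be zero — in particular, the smallest Lyapunov exponent is $\geq 0$ (equivalently, $> -\infty$) for every invariant measure.

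Now the abstract criterion (the "remark" of the title): if a semi-invertible cocycle $(A,f)$ has the property that for every ergodic invariant measure $\mu$ the smallest Lyapunov exponent $\lambda_{\min}(\mu) > -\infty$, then $A(x) \in GL(d,\R)$ for all $x \in M$. The plan to prove this: suppose not, so $\det A(x_0) = 0$ for some $x_0$. Consider the function $\varphi(x) = \log|\det A(x)|$, which is upper semicontinuous (valued in $[-\infty,\infty)$) and bounded above. The Birkhoff averages $\frac{1}{n}\sum_{k=0}^{n-1}\varphi(f^k x) = \frac1n \log|\det A^n(x)|$ govern the sum of Lyapunov exponents. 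If $\varphi(x_0) = -\infty$, one wants to build an invariant measure (or use the variational principle for the upper semicontinuous potential $\varphi$) that "sees" the drop, forcing $\int \varphi\, d\mu = -\infty$ for some ergodic $\mu$, hence $\sum_i \lambda_i(\mu) = -\infty$, hence $\lambda_{\min}(\mu) = -\infty$ — contradiction. The honest subtlety is that a single point with $\det = 0$ need not be generic for any measure; but here one should exploit that the orbit can get trapped near the non-invertibility locus, or more robustly, use that $\{x : \det A(x) = 0\}$ being nonempty together with continuity of $A$ lets us find points where $|\det A^n(x)|$ is arbitrarily small for all large $n$, build empirical measures, and pass to the limit using upper semicontinuity of $\varphi$ (so $\limsup$ of the averages is $\leq \int \varphi\,d\mu$). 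Alternatively, and perhaps cleaner: reduce to the Oseledets decomposition and show that if $\lambda_{\min}(\mu) > -\infty$ for all $\mu$ then the cocycle admits a semi-uniform lower bound $\|A^n(x) v\| \geq C e^{-n\epsilon}\|v\|$ type estimate (a "semi-uniform ergodic theorem" / Schreiber–Sturman–Stark-type argument), which immediately gives $A(x)$ injective, hence invertible.

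The main obstacle, then, is the abstract criterion — specifically ruling out the pathology that the non-invertibility locus is invisible to invariant measures. I expect the resolution to run through a semi-uniform subadditive ergodic theorem: the quantity $\inf_{\|v\|=1}\frac1n\log\|A^n(x)v\|$ is (up to sign and passing to exterior powers / singular values) controlled by subadditive cocycles, and the hypothesis $\lambda_{\min}(\mu) \geq c > -\infty$ for all $\mu$ upgrades to a uniform-in-$x$ bound $\frac1n \log\|\wedge^{d}A^n(x)\| \ge c' $ for large $n$, which is incompatible with $\det A(x_0) = 0$ (since then $\wedge^d A^n(x_0)$ has a factor that vanishes). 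Once invertibility is in hand, $(A,f)$ is an honest $GL(d,\R)$-cocycle satisfying \eqref{eq: poc}, so Kalinin's theorem \cite{Kal11} applies verbatim and produces the Hölder coboundary map $P:M\to GL(d,\R)$, completing the proof.
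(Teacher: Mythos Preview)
Your overall architecture is right --- deduce from \eqref{eq: poc} that all Lyapunov exponents vanish for every ergodic measure, then invoke an abstract invertibility criterion, then apply Kalinin's theorem --- and the final step via \cite{Kal11} is exactly what the paper does. But the abstract criterion you propose is false, and this is where the argument breaks.

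You state the criterion as: if $\lambda_{\min}(\mu) > -\infty$ for every ergodic $\mu$, then $A(x)\in GL(d,\R)$ for all $x$. The paper explicitly warns against this in Remark~\ref{rem: invertibility a.e.}: there is a cocycle over a base satisfying all the hyperbolicity hypotheses, with $\gamma_d(A,\mu)\geq c>0$ for \emph{every} ergodic $\mu$, yet $A(x_0)\notin GL(d,\R)$ for some $x_0$ (see \cite[Example~2.2]{Bac17}). Your proposed proofs of the criterion fail for the reason you yourself flag: the point $x_0$ need not be visible to any invariant measure. Concretely, the semi-uniform subadditive ergodic theorem you want to apply to $-\log|\det A^n(x)|$ requires a continuous (hence finite-valued) subadditive sequence, and here the sequence takes the value $+\infty$ at $x_0$; and the empirical-measure argument fails because upper semicontinuity of $\varphi=\log|\det A|$ only gives $\limsup\int\varphi\,d\mu_n\leq\int\varphi\,d\mu$, which does not force the limit measure to have $\int\varphi\,d\mu=-\infty$.

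The paper's actual criterion (Theorem~\ref{theo:main}) is quantitative: one needs $-\tau\leq\sum_i\gamma_i(A,\mu)\leq\rho$ with $\rho+\tau<\alpha\theta/2$, where $\alpha$ is the H\"older exponent of $A$ and $\theta$ the contraction rate in the specification property. The proof does not attempt a uniform lower bound on $|\det A^n|$. Instead, assuming $\det A(x_0)=0$ (equivalently $\Lambda^d A(x_0)=0$), one uses periodic exponential specification to produce a periodic point $p_n$ of period $\approx 2n$ with $d(p_n,x_0)\lesssim e^{-\theta n}$. H\"older continuity gives $\|\Lambda^d A(p_n)\|\lesssim e^{-\alpha\theta n}$, and the \emph{upper} semi-uniform bound (Proposition~\ref{prop: cresc}, which \emph{does} work since $\log\|A^n\|$ can be made finite) controls the remaining $\approx 2n$ factors by $e^{(\rho+\varepsilon)2n}$. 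Hence $\|\Lambda^d A^{2n+S}(p_n)\|\lesssim e^{(2\rho-\alpha\theta+2\varepsilon)n}$, forcing $\gamma_1(\Lambda^d A,\mu_{p_n})\leq \rho+\varepsilon-\alpha\theta/2<-\tau$ for large $n$, contradicting the lower bound. In the application at hand all exponents vanish, so $\rho=\tau=0$ and the quantitative condition is trivially met --- but the mechanism is essentially different from what you sketched.
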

The previous result is, in fact, a consequence of a more general one that we got which can be roughly stated as (see Theorem \ref{theo:main})
\begin{theorem}
If the semi-invertible cocycle $(A,f)$ has only small Lyapunov exponents, in modulus, then it is, indeed, invertible.
\end{theorem}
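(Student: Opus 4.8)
The plan is to prove the sharper form: assuming $f$ is a hyperbolic homeomorphism with hyperbolicity rate $\chi$, $A\colon M\to M(d,\R)$ is H\"older with exponent $\beta$, and every Lyapunov exponent of $(A,f)$ with respect to every $f$-invariant probability measure is, in modulus, smaller than some constant $c<\beta\chi$, then $A(x)\in GL(d,\R)$ for all $x\in M$. The coboundary conclusion (and hence the Liv\v{s}ic-type Theorem~1.1) then follows by feeding the now genuinely invertible cocycle into Kalinin's theorem \cite{Kal11} and (the semi-invertible adaptation of) Cao's estimate \cite{Cao03}. So the whole job is to rule out $Z:=\{x\in M:\det A(x)=0\}\neq\emptyset$. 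Suppose $Z\neq\emptyset$ and fix $x_0\in Z$ and a unit vector $v_0\in\ker A(x_0)$; since $\ker A^n(x)\supseteq\ker A(x)$ for every $n$, the matrix $A^n(x_0)$ is singular for all $n\ge 1$.

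The first ingredient I would set up is the determinant identity: for ergodic $\mu$ one has $\sum_i\lambda_i(\mu)=\int_M\log|\det A|\,d\mu$ when the integral is finite, and the sum equals $-\infty$ as soon as $\mu(Z)>0$. Consequently the hypothesis forces $\int_M\log|\det A|\,d\mu>-dc$ for every invariant $\mu$, and in particular $\mu(Z)=0$ for every invariant $\mu$ (so $x_0$ is neither periodic nor charged by any invariant measure — which is exactly why I cannot argue through a single bad measure and must use the hyperbolicity of $f$ instead). The second ingredient is the elementary max-plus identity $\sup_\mu\lambda_{\max}(\mu)=\lim_n\tfrac1n\log\sup_{x}\|A^n(x)\|$ (the right-hand limit exists by subadditivity, and both inequalities are standard, using upper semicontinuity of $\log\|A^n(\cdot)\|$ for one of them); the Lyapunov bound therefore upgrades to a \emph{uniform} growth estimate $\sup_{x\in M}\|A^n(x)\|\le e^{cn+o(n)}$.

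Now I bring in the hyperbolicity. Since $f$ has dense periodic orbits and satisfies the shadowing/closing property, one can produce periodic points $p$ of period $q\to\infty$ whose orbits shadow a long orbit segment of $x_0$ with shadowing constant $\delta_q$ that is \emph{exponentially} small in $q$ at rate $\chi$; in particular $d(p,x_0)\le\mathrm{const}\cdot e^{-\chi q}$. H\"older continuity of $A$ then gives $\|A(p)v_0\|\le C\,d(p,x_0)^{\beta}\le\mathrm{const}\cdot e^{-\beta\chi q}$, and combined with the uniform growth bound,
\[
\|A^{q}(p)v_0\|\le\|A^{q-1}(f(p))\|\cdot\|A(p)v_0\|\le e^{q(c-\beta\chi)+o(q)}\longrightarrow 0,
\]
using $c<\beta\chi$. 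In the Liv\v{s}ic situation $A^q(p)=\Id$, so $\|A^q(p)v_0\|=1$ and this is already a contradiction. In the general case one first observes that $A^q(p)$ must be invertible (otherwise the least Lyapunov exponent of the periodic measure of $p$ is $-\infty$, against the hypothesis), and then the displayed estimate forces $\|(A^q(p))^{-1}\|\ge\|A^q(p)v_0\|^{-1}$ to blow up faster than $e^{q(\beta\chi-c)}$, which one plays against the $-c$ lower bound on the exponents of the orbit of $p$.

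The step I expect to be the real obstacle is precisely this last one: converting ``$A^q(p)$ invertible, with spectral data within $e^{\pm cq}$'' into a genuine \emph{uniform} lower bound $\sigma_d\!\big(A^q(p)\big)\ge e^{-q(c'+o(1))}$ over all periodic $p$ with $c'<\beta\chi-c$, where $\sigma_d$ denotes the least singular value — naively the least singular value of a long product is not controlled by eigenvalues, and one has to exploit the additive structure $\log|\det A^n|=\sum_{k<n}\log|\det A\circ f^k|$ together with the two-sided bound on the Lyapunov spectrum (equivalently, on $\sum_i\lambda_i$ and on $\lambda_{\max}$) to close the gap. Getting the three constants $\beta$, $\chi$, $c$ into the right relation so that this mechanism runs is where the content of Theorem~\ref{theo:main} sits; the rest is bookkeeping with the closing lemma and H\"older estimates.
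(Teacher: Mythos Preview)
Your outline tracks the paper's argument closely: the uniform growth bound from the upper Lyapunov estimate (the paper's Proposition~3.1), the use of periodic shadowing to produce $p$ with $d(p,x_0)$ exponentially small in the period, and the H\"older estimate $\|A(p)-A(x_0)\|\lesssim d(p,x_0)^{\alpha}$ are exactly what the paper does. The difference lies precisely at the obstacle you flag yourself: you attack a single vector $v_0\in\ker A(x_0)$ and are then forced to control the least singular value $\sigma_d(A^q(p))$, which, as you note, is not directly governed by the Lyapunov data of the periodic measure.

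The paper sidesteps this entirely by passing to the $d$th exterior power $\Lambda^d A$, i.e.\ to the scalar cocycle $x\mapsto\det A(x)$. The hypothesis $-\tau\le\sum_i\gamma_i(A,\mu)\le\rho$ becomes $-\tau\le\gamma_1(\Lambda^d A,\mu)\le\rho$, and ``$A(x_0)$ singular'' becomes ``$\Lambda^d A(x_0)=0$''. Now one only has to show that a cocycle with top exponent in $[-\tau,\rho]$ can never equal the zero matrix, and for that the computation you already wrote for the Liv\v{s}ic case works verbatim in general: from $\Lambda^d A(x_0)=0$ one gets $\|\Lambda^d A^{q}(p)\|\le \hat C\,e^{(\rho+\varepsilon-\alpha\theta/2)q}$, whence $\gamma_1(\Lambda^d A,\mu_p)\le\rho+\varepsilon-\alpha\theta/2<-\tau$, contradicting the lower bound. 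No singular-value estimate is needed because for a one-dimensional cocycle the norm \emph{is} the absolute value of the determinant. Your hint about the additive structure of $\log|\det A^n|$ is pointing at exactly this reduction; making it a definition rather than an estimate is what removes the bookkeeping and gives the clean constant $\rho+\tau<\alpha\theta/2$ rather than something with a factor of $d$.

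One minor correction on the shadowing: to get $d(p,x_0)\lesssim e^{-\theta n}$ one must shadow an orbit segment of length $2n$ \emph{centered} at $x_0$, so the period is $q\approx 2n$ and the effective rate against the period is $\theta/2$, not $\theta$. This is the origin of the $\tfrac12$ in the condition $\rho+\tau<\tfrac{\alpha\theta}{2}$; your stated threshold $c<\beta\chi$ is off by this factor.
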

In particular, the invertibility of the cocycle $(A,f)$ can be read out of its asymptotic behaviour.

\section{Statements}\label{sec: statements}

Let $(M,d)$ be a compact metric space, $f: M \to M $ a homeomorphism and $A:M\to M(d,\R)$ a $\alpha$-H\"{o}lder continuous map. This means that there exists a constant $C_1>0$ such that
\begin{displaymath}
\norm{A(x)-A(y)} \leq C_1 d(x,y)^{\alpha}
\end{displaymath}
for all $x,y\in M$ where $\norm{A}$ denotes the operator norm of a matrix $A$, that is, $\norm{A} =\sup \lbrace \norm{Av}/\norm{v};\; \norm{v}\neq 0 \rbrace$. 

\subsection{Lyapunov exponents} Given an ergodic $f$-invariant Borel propability measure $\mu$, it was proved in \cite{FLQ10} that there exists a full $\mu$-measure set $\Reg \subset M$, whose points are called $\mu$-regular points, such that for every $x\in \Reg$ there exist numbers $\lambda _1>\ldots > \lambda _{l}\geq -\infty$, called \textit{Lyapunov exponents}, and a direct sum decomposition $\mathbb{R}^d=E^{1,A}_{x}\oplus \ldots \oplus E^{l,A}_{x}$ into vector subspaces which are called \textit{Oseledets subspaces} and depend measurable on $x$ such that, for every $1\leq i \leq l$,
\begin{itemize}
\item dim$(E^{i,A}_{x})$ is constant,
\item $A(x)E^{i,A}_{x}\subseteq E^{i,A}_{f(x)}$ with equality when $\lambda _i>-\infty$
\end{itemize}
and
\begin{itemize}
\item $\lambda _i =\lim _{n\rightarrow +\infty} \dfrac{1}{n}\log \parallel A^n(x)v\parallel$ 
for every non-zero $v\in E^{i,A}_{x}$.
\end{itemize}
This result extends a famous theorem due to Oseledets \cite{Ose68} known as the \textit{multiplicative ergodic theorem} which was originally stated in both, \textit{invertible} (both $f$ and the matrices are assumed to be invertible) and \textit{non-invertible} (neither $f$ nor the matrices are assumed to be invertible) settings (see also \cite{LLE}). While in the invertible case the conclusion is similar to the conclusion above (except that all Lyapunov exponents are finite), in the non-invertible case, instead of a direct sum decomposition into invariant vector subspaces, one only gets an invariant \textit{filtration} (a sequence of nested subspaces) of $\R ^d$. We denote by 
$$\gamma _1(A,\mu)\geq \gamma _2(A,\mu)\geq \ldots \geq \gamma _d(A,\mu)$$
the Lyapunov exponents of $(A,f)$ with respect to the measure $\mu$ counted with multiplicities.

\subsection{Periodic exponential specification property and the Anosov Closing property}
We say that $f$ satisfies the \emph{periodic exponential specification property} if there exists $\theta >0$ so that for every $\delta>0$ there exists $S=S (\delta)>0$ so that for any $x\in M$ and any $n\in \mathbb{N}$ there exists a periodic point $p\in M$ such that $f^{n+S}(p)=p$ satisfying  
$$d (f^{j} (p),f^j (x))<\delta e^{-\theta \min\lbrace j, n-j\rbrace} \text{ for all } j=0,1,\ldots,n.$$
Notice that this is particular version of the sometimes called \emph{Bowen's exponential specification property} where one requeries that the previous property holds for unions of segments of orbits. That is, given points $\{x_1, \ldots ,x_k \}$ in $M$ and any integers $a_1\leq b_1<a_2\leq b_2< \ldots <a_k \leq b_k $ satisfying $a_{j}-b_{j-1}\geq S$, for every $t\geq b_k-a_1+S$ there exists a periodic point $p_t\in M$ so that $f^t(p_t)=p_t$ and 
$$d (f^{j} (p_t),f^j (x_i))<\delta e^{-\theta \min\lbrace j-a_i, b_i-j\rbrace} \text{ for all } a_i\leq j\leq b_i, \text{ and } i=1,\ldots ,k.$$ 
See \cite{Tia17}, for instance. Since we are not going to need this full version, we consider the previous simpler form.

We say that $f$ satisfies the \textit{Anosov Closing property} if there exist $C_2 ,\varepsilon _0 ,\theta >0$ such that if $z\in M$ satisfies $d(f^n(z),z)<\varepsilon _0$ then there exists a periodic point $p\in M$ such that $f^n(p)=p$ and
\begin{displaymath}
d(f^j(z),f^j(p))\leq C_2 e^{-\theta \min\lbrace j, n-j\rbrace}d(f^n(z),z) \text{ for every } j=0,1,\ldots ,n
\end{displaymath}
and a point $y\in M$ satisfying
\begin{displaymath}
d(f^j(y),f^j(p))\leq C_2 e^{-\theta j}d(f^n(z),z) \text{ and } d(f^j(z),f^j(y))\leq C_2 e^{-\theta (n-j)}d(f^n(z),z)
\end{displaymath}
for every $j=0,1,\ldots ,n$ (see \cite{Kal11}).

Every topologically mixing locally maximal hyperbolic set has the periodic exponential specification property as well as the Anosov closing property (see for instance \cite{KH95, Tia17}). In particular, transitive Anosov diffeomorphims satisfy both properties. Moreover, it is easy to see that if a homeomorphism $f$ is topologically conjugated to a map $g$ satisfying both of the previous properties and the conjugacy and its inverse are H\"older continuous then $f$ itself satisfies both properties. In particular, it follows from \cite{Gog10} that there are non-uniformly hyperbolic systems satisfying the periodic exponential specification property and the Anosov closing property.

\subsection{Main results} The main result of this note is the following

\begin{theorem}\label{theo:main}
Let $f:M\to M$ be a homeomorphism satisfying the periodic exponential specification property and $A:M \to M(d,\R) $ a $\alpha$-H\"{o}lder continuous map. Assume there exist $\rho, \tau >0$ satisfying $\rho +\tau <\frac{\alpha\theta}{2}$ so that 
\begin{equation}\label{eq: hip}
-\tau \leq \gamma _d(A,\mu)+ \ldots + \gamma_1(A,\mu)\leq \rho
\end{equation}
for every ergodic $f$-invariant measure $\mu$ on $M$. Then, $A(x)\in GL(d,\R)$ for every $x\in M$.
\end{theorem}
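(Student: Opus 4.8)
The plan is to argue by contradiction. Suppose $\det A(x_0)=0$ for some $x_0\in M$; I will produce a sequence of ergodic $f$-invariant measures $\mu_m$ with $\limsup_m\big(\gamma_1(A,\mu_m)+\cdots+\gamma_d(A,\mu_m)\big)\le\rho-\tfrac{\alpha\theta}{2}$, which is $<-\tau$ and so contradicts \eqref{eq: hip}. Two preliminary remarks. First, since $A$ is $\alpha$-H\"older and $\det$ is polynomial, hence Lipschitz on the bounded set $A(M)$, the map $x\mapsto\det A(x)$ is $\alpha$-H\"older; in particular $|\det A(x)|=|\det A(x)-\det A(x_0)|\le C\,d(x,x_0)^{\alpha}$ for some $C>0$, and the function $\varphi:=\log|\det A|\colon M\to[-\infty,+\infty)$ is upper semicontinuous and bounded above. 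Second, $\gamma_1(A,\mu)+\cdots+\gamma_d(A,\mu)=\lim_{n}\frac1n\log|\det A^n(x)|$ for $\mu$-almost every $x$ (a standard consequence of the multiplicative ergodic theorem, see \cite{FLQ10}); since $\log|\det A^n(x)|=\sum_{j=0}^{n-1}\varphi(f^j(x))$ and $\varphi$ is bounded above, Birkhoff's ergodic theorem identifies this limit with $\int_M\varphi\,d\mu$. Hence, by the ergodic decomposition, \eqref{eq: hip} amounts to $-\tau\le\int_M\varphi\,d\mu\le\rho$ for \emph{every} $f$-invariant measure $\mu$.

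Now fix $\delta>0$ and let $S=S(\delta)$ be given by the periodic exponential specification property. For each $m\in\N$, apply that property to the point $f^{-m}(x_0)$ and the integer $2m$: we obtain a periodic point $p_m$ with $f^{2m+S}(p_m)=p_m$ and $d\big(f^{j}(p_m),f^{j-m}(x_0)\big)<\delta e^{-\theta\min\{j,\,2m-j\}}$ for $j=0,\dots,2m$. Taking $j=m$ gives $d(f^m(p_m),x_0)<\delta e^{-\theta m}$, hence $\varphi(f^m(p_m))\le\log C+\alpha\log\delta-\alpha\theta m$. Set $T_m:=2m+S$ and let $\mu_m:=\frac1{T_m}\sum_{j=0}^{T_m-1}\delta_{f^j(p_m)}$ be the orbital measure of $p_m$, which is ergodic. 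If the orbit of $p_m$ meets $\{\det A=0\}$ then $\det A^{T_m}(p_m)=0$, so $\gamma_1(A,\mu_m)+\cdots+\gamma_d(A,\mu_m)=-\infty$, already contradicting \eqref{eq: hip}; so assume otherwise, whence $\gamma_1(A,\mu_m)+\cdots+\gamma_d(A,\mu_m)=\int\varphi\,d\mu_m$ is finite. Let $\nu_m$ be the uniform probability measure on the orbit of $p_m$ with the single point $f^m(p_m)$ deleted, so that $\mu_m=\tfrac{T_m-1}{T_m}\,\nu_m+\tfrac1{T_m}\,\delta_{f^m(p_m)}$ and therefore
\[
\gamma_1(A,\mu_m)+\cdots+\gamma_d(A,\mu_m)=\int\varphi\,d\mu_m=\frac{T_m-1}{T_m}\int\varphi\,d\nu_m+\frac{\varphi(f^m(p_m))}{T_m}\,.
\]

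The decisive step is the bound $\limsup_{m\to\infty}\int\varphi\,d\nu_m\le\rho$. Since $f_*\nu_m$ is obtained from $\nu_m$ by deleting and reinserting a bounded number of atoms, $\|f_*\nu_m-\nu_m\|=O(1/T_m)\to0$, so every weak-$*$ accumulation point of $(\nu_m)$ is $f$-invariant. Choosing a subsequence $m_k$ with $\int\varphi\,d\nu_{m_k}$ converging to $\limsup_m\int\varphi\,d\nu_m$ and, refining it, with $\nu_{m_k}\to\nu$ weak-$*$, the portmanteau theorem for upper semicontinuous bounded-above functions gives $\limsup_m\int\varphi\,d\nu_m=\lim_k\int\varphi\,d\nu_{m_k}\le\int\varphi\,d\nu\le\rho$, the last inequality by the first paragraph. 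Granting this, the displayed identity together with $\varphi(f^m(p_m))\le\log C+\alpha\log\delta-\alpha\theta m$, $\tfrac{T_m-1}{T_m}\to1$, and $\tfrac1{T_m}(\log C+\alpha\log\delta-\alpha\theta m)\to-\tfrac{\alpha\theta}{2}$ (because $T_m=2m+S$) yields $\limsup_m\big(\gamma_1(A,\mu_m)+\cdots+\gamma_d(A,\mu_m)\big)\le\rho-\tfrac{\alpha\theta}{2}$. On the other hand \eqref{eq: hip} forces $\gamma_1(A,\mu_m)+\cdots+\gamma_d(A,\mu_m)\ge-\tau$ for every $m$, so $-\tau\le\rho-\tfrac{\alpha\theta}{2}$, i.e. $\rho+\tau\ge\tfrac{\alpha\theta}{2}$, contrary to the hypothesis. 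Therefore no such $x_0$ exists, i.e. $A(x)\in GL(d,\R)$ for every $x\in M$.

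The step I expect to be the crux is precisely this control of $\int\varphi\,d\nu_m$: one must isolate the single exponentially deep approach of the orbit of $p_m$ to $x_0$ — which, after dividing by the period $T_m=2m+S$, produces the gain $-\tfrac{\alpha\theta}{2}$ — while estimating the remaining $T_m-1$ factors \emph{only} through the a priori bound $\int\varphi\,d\nu\le\rho$ valid for genuine invariant measures; this is where both the upper bound $\rho$ and the strict inequality $\rho+\tau<\tfrac{\alpha\theta}{2}$ enter, a naive bound by $\sup_M\log|\det A|$ being far too weak. A secondary point worth being careful about is that $\varphi$ is only upper semicontinuous and equals $-\infty$ on $\{\det A=0\}$; this is harmless once the favourable case of a periodic orbit meeting $\{\det A=0\}$ has been set aside, since throughout one only needs semicontinuity in the direction delivered by the portmanteau theorem.
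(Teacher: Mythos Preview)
Your proof is correct and follows the same global strategy as the paper --- use periodic specification to shadow the point $x_0$ by a periodic orbit of period $\approx 2m$, observe that one factor in the product is $\lesssim e^{-\alpha\theta m}$, and show that the remaining factors can contribute growth at most $e^{\rho(2m+S)}$, yielding a contradiction with the lower bound $-\tau$ --- but the key technical device you use for the ``bulk'' estimate is genuinely different.

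The paper first passes to the $d$-th exterior power (so that $\Lambda^dA(x)$ is essentially multiplication by $\det A(x)$ and the hypothesis becomes $-\tau\le\gamma_1(\Lambda^dA,\mu)\le\rho$), and then invokes a \emph{uniform} growth bound: from $\gamma_1(\Lambda^dA,\mu)\le\rho$ for all ergodic $\mu$ one deduces, via a subadditive/semi-uniform argument (the paper quotes \cite{KS13}), that $\|\Lambda^dA^n(x)\|\le C_\varepsilon e^{(\rho+\varepsilon)n}$ for \emph{every} $x$ and $n$. This makes the estimate on $\|A^{2n+S}(p_n)\|$ completely explicit at each finite $n$. You instead work directly with $\varphi=\log|\det A|$, split off the single bad atom, and control the remaining average $\int\varphi\,d\nu_m$ only \emph{asymptotically}, by passing to a weak-$*$ limit and using upper semicontinuity together with the bound $\int\varphi\,d\nu\le\rho$ valid for genuinely invariant $\nu$. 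Your route is more self-contained (no appeal to \cite{KS13}) and arguably softer; the paper's route gives a cleaner quantitative estimate at finite time and packages the $A(x)=0$ step as a reusable lemma (Proposition~\ref{prop: non-zero}) before the exterior-power reduction. Substantively, both arguments extract exactly the same gain $-\tfrac{\alpha\theta}{2}$ from the single close approach and the same ceiling $\rho$ from the rest of the orbit.
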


A simple observation is that hypothesis \eqref{eq: hip} is satisfied, for instance, whenever
\begin{equation*}
\frac{-\tau}{d} \leq \gamma _d(A,\mu)\leq \ldots \leq \gamma_1(A,\mu)\leq \frac{\rho}{d}
\end{equation*}
for every ergodic $f$-invariant measure $\mu$. In particular, if all Lyapunov exponents of $(A,f)$ are in a small neighborhood of zero then the semi-invertible cocycle $(A,f)$ is, indeed, invertible. Moreover, under the additional assumption that $f$ satisfies the Anosov closing property, assumption \eqref{eq: hip} can be replaced by the assumption that
$$-\tau \leq \gamma _d(A,\mu_p)+ \ldots + \gamma_1(A,\mu_p)\leq \rho$$
for every ergodic $f$-invariant measure $\mu_p$ supported on periodic points. In fact, by Theorem 2.1 of \cite{Bac17} they are equivalent.

\begin{remark}\label{rem: invertibility a.e.}
Observe that assuming that all Lyapunov exponents of $(A,f)$ are uniformly bounded by below, that is, $\gamma_d(A,\mu)>c$ for every ergodic $f$-invariant measure $\mu$ on $M$ and some $c\in \R$ implies that $A(x)\in GL(d,\R)$ for $\mu$-almost every $x\in M$ with respect to \emph{every} $f$-invariant measure $\mu$ (see Corollary 1 of \cite{Bac17}). On the other hand, as observed in Example 2.2 of \cite{Bac17} and already mentioned in the introduction, this \emph{does not} imply, in general, that $A(x)\in GL(d,\R)$ for every $x\in M$. In fact, in the aforementioned example we exhibited a cocycle $(A,f)$ satisfying
$$0<c \leq \gamma _d(A,\mu)\leq \ldots \leq \gamma_1(A,\mu)\leq \log \left( \max_{x\in M}\|A(x)\| \right) $$
for every ergodic $f$-invariant measure $\mu$ so that $A(x)\notin GL(d,\R)$ for some $x\in M$. Moreover, the map $f$ in that example satisfies the assumptions of Theorem \ref{theo:main}.
\end{remark}

As an interesting consequence of the previous result we get a Liv\v{s}ic's type theorem in the, a priori, semi-invertible setting.

\begin{corollary}
Let $f:M\to M$ be a topologically transitive homeomorphim satisfying the Anosov closing property and the periodic exponential specification property and $A:M \to M(d,\R) $ a $\alpha$-H\"{o}lder continuous map. Assume that 
$$A^n(p)=\Id \text{ for every } p\in \text{Fix}(f^n) \text{ and } n\in \N.$$
Then, there exists a $\alpha$-H\"{o}lder continuous map $P:M\to GL(d,\R)$ so that
$$A(x)=P(f(x))P(x)^{-1} \text{ for every } x\in M.$$
\end{corollary}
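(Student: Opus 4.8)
The plan is to deduce the corollary from Theorem~\ref{theo:main} together with Kalinin's Liv\v{s}ic theorem for invertible cocycles, in two stages: first upgrade ``semi-invertible'' to ``invertible'', then apply the known result. The first step is to verify that the hypothesis $A^n(p)=\Id$ for all $p\in\text{Fix}(f^n)$ forces every Lyapunov exponent over every ergodic invariant measure to vanish, so that condition~\eqref{eq: hip} holds trivially with any $\rho,\tau>0$ small enough (say $\rho=\tau=\alpha\theta/8$). For measures supported on periodic orbits this is immediate: if $p$ has period $n$ then $A^n(p)=\Id$, so $A^{kn}(p)=\Id$ for all $k$, and hence $\tfrac1{kn}\log\|A^{kn}(p)v\|\to 0$ for every nonzero $v$, giving $\gamma_i(A,\mu_p)=0$ for all $i$. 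Since $f$ satisfies the Anosov closing property, Theorem~2.1 of~\cite{Bac17} (invoked in the discussion after Theorem~\ref{theo:main}) lets us pass from periodic measures to all ergodic invariant measures, so indeed $\gamma_d(A,\mu)+\cdots+\gamma_1(A,\mu)=0$ for every ergodic $f$-invariant $\mu$. In particular the chain of inequalities $-\tau\le\gamma_d(A,\mu)+\cdots+\gamma_1(A,\mu)\le\rho$ is satisfied, and $\rho+\tau=\alpha\theta/4<\alpha\theta/2$.

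With this in hand, Theorem~\ref{theo:main} applies (its hypotheses---$f$ satisfies periodic exponential specification, $A$ is $\alpha$-H\"older, and~\eqref{eq: hip}---are all met) and yields $A(x)\in GL(d,\R)$ for every $x\in M$. Thus $(A,f)$ is genuinely an invertible cocycle: $A\colon M\to GL(d,\R)$, and it is still $\alpha$-H\"older as a map into $GL(d,\R)$ (H\"older continuity is an intrinsic metric statement, unaffected by restricting the codomain).

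The second step is then a direct citation: $f$ is a topologically transitive homeomorphism satisfying the Anosov closing property (Kalinin works on a compact manifold with a topologically transitive Anosov diffeomorphism, but as noted in the excerpt the relevant property used is precisely the Anosov closing property, which $f$ has by assumption), $A\colon M\to GL(d,\R)$ is $\alpha$-H\"older, and the periodic obstruction vanishes: $A^n(p)=\Id$ for all $p\in\text{Fix}(f^n)$, $n\in\N$. Kalinin's theorem~\cite{Kal11} therefore provides an $\alpha$-H\"older transfer map $P\colon M\to GL(d,\R)$ with $A(x)=P(f(x))P(x)^{-1}$ for all $x\in M$, which is exactly the conclusion.

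I expect the only genuine subtlety to be the first step---specifically, making sure the passage from periodic measures to all ergodic measures is legitimate here. Once one grants the cited equivalence (Theorem~2.1 of~\cite{Bac17}), everything else is bookkeeping; the verification that $\rho+\tau<\alpha\theta/2$ is immediate since the exponents are all exactly zero and $\rho,\tau$ may be taken arbitrarily small. A secondary point worth a sentence is that Kalinin's Liv\v{s}ic theorem is usually stated for Anosov diffeomorphisms of compact manifolds rather than for abstract homeomorphisms with the Anosov closing property; if one wants a fully self-contained argument one re-runs Kalinin's proof (the transfer function is built along homoclinic/periodic approximations using exactly the Anosov closing estimates and the H\"older bound on $A$), but citing~\cite{Kal11} as the paper does in the introduction is the intended route.
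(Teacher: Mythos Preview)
Your proposal is correct and follows essentially the same route as the paper: verify \eqref{eq: hip} via the periodic data (the paper compresses your first three paragraphs into ``it follows from the comments after our main result''), apply Theorem~\ref{theo:main} to get $A(x)\in GL(d,\R)$ everywhere, then cite \cite[Theorem 1.1]{Kal11}. The one place where the paper is more careful than you is the passage ``$A$ is still $\alpha$-H\"older as a map into $GL(d,\R)$'': the relevant metric on $GL(d,\R)$ in Kalinin's setting is $\tilde d(B,C)=\|B-C\|+\|B^{-1}-C^{-1}\|$, so it is not merely a matter of restricting the codomain---one must check that $x\mapsto A(x)^{-1}$ is also $\alpha$-H\"older, which the paper does explicitly via $\|A(x)^{-1}-A(y)^{-1}\|\le \|A(x)^{-1}\|\,\|A(y)^{-1}\|\,\|A(x)-A(y)\|$ and compactness of $M$.
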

\begin{proof}
Since $A^n(p)=\Id$ for every $p\in \text{Fix}(f^n)$ and $n\in \N$, it follows from the comments after our main result that the hypotheses of Theorem \ref{theo:main} are satisfied. In particular, $A(x)\in GL(d,\R)$ for every $x\in M$. Let $C=\max_{x\in M} \{ \norm{A(x)},\norm{A(x)}^{-1} \}$. Then,
\begin{displaymath}
\begin{split}
\norm{A(x)^{-1}-A(y)^{-1}}&= \norm{A(x)^{-1}(A(y)-A(x))A(y)^{-1}}\\
&\leq \norm{A(x)^{-1}}\norm{A(y)^{-1}}\norm{A(x)-A(y)}\\
&\leq C^2 C_1d(x,y)^\alpha.
\end{split}
\end{displaymath}
In particular, $A:M\to GL(d,\R)$ is a $\alpha$-H\"{o}lder continuous map with respect to the distance given by
$$\tilde{d}(A(x),A(y))=\norm{A(x)-A(y)}+\norm{A(x)^{-1}-A(y)^{-1}}$$
on $GL(d,\R)$. Now, the result follows from \cite[Theorem 1.1]{Kal11}. 
\end{proof}

\section{Proof of the main result}

Let $f:M\to M$ be a homeomorphism satisfying the periodic exponential specification property and $A:M \to M(d,\R) $ a $\alpha$-H\"{o}lder continuous map. We start with an auxiliary result.  

\begin{proposition}\label{prop: cresc}
Suppose there exists $\rho>0$ so that 
$$ \gamma_1(A,\mu)\leq \rho$$
for every ergodic $f$-invariant measure $\mu$ on $M$. Then, for every $\varepsilon>0$ there exists a constant $C_\varepsilon>0$ so that
\begin{equation*}
\|A^n(x)\|\leq C_\varepsilon e^{(\rho+\varepsilon) n} 
\end{equation*}
for every $x\in M$ and $n\in \mathbb{N}$.
\end{proposition}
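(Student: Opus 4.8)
The plan is to deduce the uniform estimate from the hypothesis on the top Lyapunov exponent, using Kingman's subadditive ergodic theorem together with an empirical measure argument; the periodic exponential specification property plays no role here. Write $\varphi_n(x)=\log\norm{A^n(x)}\in[-\infty,+\infty)$. From $A^{n+m}(x)=A^m(f^n(x))A^n(x)$ one gets the subadditivity relation $\varphi_{n+m}(x)\le\varphi_m(f^n(x))+\varphi_n(x)$, and each $\varphi_n$ is continuous as a map into $[-\infty,+\infty)$, hence upper semicontinuous and bounded above; set $a_n:=\sup_{x\in M}\varphi_n(x)$. (If some $A^N$ vanishes identically then $A^n\equiv 0$ for all $n\ge N$ and the conclusion is immediate, so assume not; then every $a_n$ is finite and $a_0=0$.) Subadditivity gives $a_{n+m}\le a_n+a_m$, so by Fekete's lemma the limit $L:=\lim_n\tfrac1n a_n=\inf_n\tfrac1n a_n$ exists. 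I claim $L\le\rho$; granting this, for $\varepsilon>0$ one picks $N$ with $a_n\le(\rho+\varepsilon)n$ for all $n\ge N$, sets $C_\varepsilon:=\max\{1,\,\max_{1\le n<N}e^{a_n-(\rho+\varepsilon)n}\}$, and obtains $\norm{A^n(x)}\le e^{a_n}\le C_\varepsilon e^{(\rho+\varepsilon)n}$ for every $x\in M$ and every $n\ge 0$, which is the assertion.

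To prove $L\le\rho$ I argue by contradiction: assuming $L>\rho$, I will produce an ergodic $f$-invariant measure with top Lyapunov exponent $\ge L$, contradicting the hypothesis. For each $n$ choose $x_n\in M$ with $\varphi_n(x_n)\ge a_n-1$, form the empirical measures $\nu_n:=\tfrac1n\sum_{k=0}^{n-1}\delta_{f^k(x_n)}$, and let $\mu$ be a weak-$*$ accumulation point, $\nu_{n_j}\to\mu$; by the usual Krylov--Bogolyubov argument $\mu$ is $f$-invariant. The key step is the inequality
\[
\frac1m\int_M\varphi_m\,d\mu\ \ge\ L\qquad\text{for every }m\ge 1 .
\]
It follows from a subadditivity bookkeeping: writing each $t\in\{0,\dots,n-m\}$ uniquely as $t=s+im$ with $0\le s<m$ and applying subadditivity block by block gives, for $n\ge m$, the bound $m\,\varphi_n(x)\le E_m+\sum_{t=0}^{n-m}\varphi_m(f^t(x))$ with $E_m$ depending only on $m$. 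Applying this to $x=x_{n_j}$, dividing by $n_j$, using $\varphi_{n_j}(x_{n_j})\ge a_{n_j}-1$ with $a_{n_j}/n_j\to L$, and passing to the weak-$*$ limit --- upper semicontinuity of $\varphi_m$ and its boundedness above give $\int_M\varphi_m\,d\mu\ge\limsup_j\int_M\varphi_m\,d\nu_{n_j}$ --- yields the displayed inequality. The possible value $-\infty$ of $\varphi_m$ causes no trouble: the terms $\varphi_m(f^t(x_{n_j}))$ with $t\le n_j-m$ are automatically finite whenever $A^{n_j}(x_{n_j})\neq 0$, so one may safely work with the (genuinely finite) averages over the first $n_j-m+1$ iterates, which share the weak-$*$ limit $\mu$.

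Finally, Kingman's subadditive ergodic theorem gives $\inf_m\tfrac1m\int_M\varphi_m\,d\mu=\int_M\bar\varphi\,d\mu$, where $\bar\varphi(x)=\lim_n\tfrac1n\varphi_n(x)$ is $f$-invariant and coincides, on $\mu$-almost every ergodic component $\nu$ of $\mu$, with the top Lyapunov exponent $\gamma_1(A,\nu)$; combined with the displayed bound (which forces $\int_M\bar\varphi\,d\mu\ge L$) this gives $\gamma_1(A,\nu)\ge L>\rho$ for a positive-measure set of ergodic components, contradicting the hypothesis that $\gamma_1(A,\mu)\le\rho$ for every ergodic $f$-invariant $\mu$. (Alternatively, this contradiction argument can be replaced by citing the subadditive variational principle, which identifies $\lim_n\tfrac1n\sup_{x\in M}\log\norm{A^n(x)}$ with $\sup_\mu\gamma_1(A,\mu)$.) The step I expect to require the most care is the second paragraph: the subadditivity bookkeeping that converts the near-maximal value $\varphi_{n_j}(x_{n_j})$ into an integral of $\varphi_m$ against the empirical measure, and the correct use of upper semicontinuity in the weak-$*$ limit --- the inequality runs in the needed direction precisely because $\varphi_m$ is bounded above, even though it may take the value $-\infty$. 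Fekete's lemma, Krylov--Bogolyubov, Kingman's theorem, and the absorption of the small values of $n$ into the constant $C_\varepsilon$ are all routine.
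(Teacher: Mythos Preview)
Your argument is correct. It is a self-contained proof of the semi-uniform subadditive result that the paper obtains by a black-box citation of \cite[Proposition~4.9]{KS13}: the paper simply applies that proposition to the subadditive family $a_n(x)=\log\|\tilde A^n(x)\|-(\rho+\varepsilon)n$ to produce a single $N$ with $a_N<0$ everywhere, and then bootstraps by submultiplicativity, whereas you reprove the needed variational statement via empirical measures, the block-subadditivity bookkeeping, upper semicontinuity under weak-$*$ limits, and Kingman plus ergodic decomposition. One presentational difference worth noting is how the value $-\infty$ is handled: the paper sidesteps it by passing to
\[
\tilde A(x)=\begin{pmatrix}1&0\\0&A(x)\end{pmatrix},
\]
which has the same upper bound on the top exponent but satisfies $\|\tilde A^n(x)\|\ge 1$ everywhere, so that $\log\|\tilde A^n\|$ is genuinely real-valued and the cited proposition applies verbatim; your direct treatment (restricting the empirical average to the first $n_j-m+1$ iterates, which are finite because $A^{n_j}(x_{n_j})\neq 0$) is equally valid and avoids the auxiliary cocycle. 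Either way the content is the subadditive variational principle you mention at the end --- citing it would make your proof as short as the paper's, while your written-out version has the advantage of being self-contained.
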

\begin{proof} 
Let us consider $\tilde{A}:M\to M(d+1,\R)$ given by 
\begin{displaymath}
\tilde{A}(x)=\left( \begin{array}{cc}
1 & 0 \\
0 & A(x)\\
\end{array}\right).
\end{displaymath}
Observe that the cocycle $(\tilde{A},f)$ also satisfies $\gamma _1(\tilde{A},\mu )\leq \rho$ for every ergodic $f$-invariant measure $\mu$ on $M$. Moreover, $\|\tilde{A}^n(x)\|\neq 0$ for every $x\in M$ and $n\in \N$. Thus, by Proposition 4.9 of \cite{KS13} applied to $$a_n(x)=\log \|\tilde{A}^n(x)\| -(\rho +\varepsilon)n$$ 
it follows that there exists $N\in \N$ so that $a_N(x)<0$ for every $x\in M$. That is,
$$\|\tilde{A}^N(x)\|\leq e^{(\rho +\varepsilon)N} \text{ for every } x\in M.$$
Consequently, taking $C_\varepsilon=\sup_{j=0,1,\ldots,N}\{\sup_{x\in M}\|\tilde{A}^j(x)\|\}$ and using the submultiplicativity of the norm it follows that
\begin{equation*}
\|\tilde{A}^n(x)\|\leq C_\varepsilon e^{(\rho+\varepsilon) n} 
\end{equation*}
for every $x\in M$ and $n\in \mathbb{N}$. Now, observing that $\|A^n(x)\|\leq \|\tilde{A}^n(x)\|$ for every $x\in M$ and $n\in \mathbb{N}$ the result follows. It is useful to point out that the only reason for using $\tilde{A}$ instead of $A$ is to guarantee that $a_n(x)\in \R$ for every $x\in M$ so to fall in the setting of \cite[Proposition 4.9]{KS13}.
\end{proof}

Our next proposition tells us that under some conditions on the largest Lyapunov exponent the map $A$ can not be equal to the zero matrix in any point of the domain.

\begin{proposition}\label{prop: non-zero} 
Suppose there exist $\rho, \tau >0$ satisfying $\rho +\tau <\frac{\alpha\theta}{2}$ so that 
$$-\tau \leq \gamma_1(A,\mu)\leq \rho$$
for every ergodic $f$-invariant measure $\mu$ on $M$. Then,
$$A(x)\neq 0 \text{ for every } x\in M.$$
\end{proposition}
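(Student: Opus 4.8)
The plan is to argue by contradiction: suppose $A(x_0) = 0$ for some $x_0 \in M$. The idea is that $A$ being exactly zero at $x_0$ forces $A$ to be uniformly tiny on a small neighborhood of $x_0$ (by H\"older continuity), and then using the periodic exponential specification property we can shadow a long orbit segment of $x_0$ — which repeatedly returns near $x_0$, or more simply just spends time near $x_0$ — by a periodic orbit. Along that periodic orbit the cocycle will be forced to pick up at least one factor of very small norm (comparable to the tiny value of $\|A\|$ near $x_0$), so the product $A^{n+S}(p)$ over the period will have norm far smaller than $e^{\rho(n+S)}$, in fact smaller than $e^{-\tau(n+S)}$ if $n$ is large. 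But the periodic point $p$ carries an ergodic invariant measure $\mu_p$ supported on its orbit, and by the defining property of Lyapunov exponents $\frac{1}{n+S}\log \|A^{n+S}(p)\| \to \gamma_1(A,\mu_p) \geq -\tau$, giving a contradiction once the quantitative estimates are lined up.

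Let me be more precise about the quantitative part. Fix $x_0$ with $A(x_0) = 0$. By $\alpha$-H\"older continuity, $\|A(y)\| \leq C_1 d(x_0,y)^\alpha$ for all $y$. Given a small $\delta > 0$, apply the periodic exponential specification property to the point $x_0$ and the integer $n$: we obtain a periodic point $p$ with $f^{n+S}(p) = p$ and $d(f^j(p), f^j(x_0)) < \delta e^{-\theta \min\{j, n-j\}}$ for $j = 0, \dots, n$. In particular, taking $j$ near $n/2$, the point $f^j(p)$ is within $\delta e^{-\theta n/2}$ of $f^j(x_0)$; but that doesn't directly bound $\|A(f^j(p))\|$ since $f^j(x_0)$ need not be close to $x_0$. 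The cleaner choice is $j = 0$: then $d(p, x_0) < \delta$, so $\|A(p)\| \leq C_1 \delta^\alpha$. Now split the period-$(n+S)$ product over $p$ as $A^{n+S}(p) = A^{n+S-1}(f(p)) \cdots A(f(p)) \cdot A(p)$, and estimate: the single factor $A(p)$ contributes $\|A(p)\| \leq C_1 \delta^\alpha$, while the remaining product of $n+S-1$ factors is bounded by Proposition 3.1 (applied to the cocycle, using the same $\rho$) times $C_\varepsilon e^{(\rho+\varepsilon)(n+S-1)}$. Hence $\|A^{n+S}(p)\| \leq C_1 \delta^\alpha \, C_\varepsilon e^{(\rho+\varepsilon)(n+S)}$.

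On the other hand, $\mu_p := \frac{1}{n+S}\sum_{i=0}^{n+S-1} \delta_{f^i(p)}$ is an ergodic $f$-invariant measure, so by hypothesis $\gamma_1(A,\mu_p) \geq -\tau$; and since along a periodic orbit $\gamma_1(A,\mu_p) = \frac{1}{n+S}\log \|A^{n+S}(p)\|$ when this is finite (and if $A^{n+S}(p) = 0$ we would have $\gamma_1 = -\infty < -\tau$, already a contradiction), we get $\|A^{n+S}(p)\| \geq e^{-\tau(n+S)}$. Combining, $e^{-\tau(n+S)} \leq C_1 C_\varepsilon \delta^\alpha e^{(\rho+\varepsilon)(n+S)}$, i.e. $\delta^\alpha \geq (C_1 C_\varepsilon)^{-1} e^{-(\rho+\tau+\varepsilon)(n+S)}$. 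Here $S = S(\delta)$ depends on $\delta$ but not on $n$, so for a fixed $\delta$ we may let $n \to \infty$: the right-hand side tends to $0$, which is fine, so this alone is not yet a contradiction — the inequality is in the wrong direction. I therefore need to instead use the freedom in $\delta$ at fixed large $n$: the point is that we need $d(p,x_0)$ small in a way that beats the exponential growth. Concretely, we want to choose $\delta$ as a function of $n$, say $\delta = \delta_n \to 0$, so that $\delta_n^\alpha$ decays \emph{faster} than $e^{-(\rho+\tau+\varepsilon)(n+S(\delta_n))}$ fails — but $S(\delta_n)$ blows up as $\delta_n \to 0$, which is exactly the tension. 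The correct route, and the main obstacle, is to exploit the \emph{exponential} shadowing: use not just $j=0$ but the fact that $f^j(p)$ stays $\delta e^{-\theta \min\{j,n-j\}}$-close to $f^j(x_0)$; combined with $A(x_0) = 0$ and the orbit of $x_0$ eventually returning close to $x_0$ is not guaranteed, so instead one runs the whole argument with the \emph{full} specification (segments of orbits) applied to many copies of $x_0$: shadow $k$ copies of the length-one orbit segment $\{x_0\}$ spaced $S$ apart, producing a periodic point whose orbit passes within $\delta$ of $x_0$ a total of $k$ times per period of length $T \approx kS$. Then the product over the period contains $k$ factors of norm $\leq C_1\delta^\alpha$ and $T - k$ factors of norm $\leq C_\varepsilon e^{\rho+\varepsilon}$, giving $\|A^T(p)\| \leq (C_1\delta^\alpha)^k (C_\varepsilon e^{\rho+\varepsilon})^{T-k}$. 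Taking $\frac{1}{T}\log$ and sending $k\to\infty$ with $\delta$ fixed (so $S$ fixed, $T = kS + O(1)$), the exponent tends to $\frac{1}{S}\log(C_1\delta^\alpha) + \frac{S-1}{S}(\rho+\varepsilon)$, which for $\delta$ small enough is $< -\tau$, contradicting $\gamma_1(A,\mu_p)\geq -\tau$. This is the step requiring care: confirming that the simple (non-union) specification stated in the paper suffices, or else invoking the union version — I expect the intended argument uses the exponential decay $e^{-\theta\min\{j,n-j\}}$ at the endpoints $j=0$ and $j=n$ of a single segment, since shadowing $x_0$ over $[0,n]$ forces $d(p,x_0) < \delta$ and $d(f^n(p), f^n(x_0))<\delta$, and then one iterates/concatenates. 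Getting this counting exactly right is the heart of the proof; once it is in place the contradiction is immediate.
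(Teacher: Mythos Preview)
You correctly set up the contradiction and correctly diagnose why your first attempt fails: applying specification to $x_0$ over $[0,n]$ gives only $d(p,x_0)<\delta$ at the endpoint $j=0$, and the resulting bound cannot beat $e^{(\rho+\varepsilon)(n+S)}$ without sending $\delta\to 0$, which drags $S(\delta)$ along. Your proposed rescue via the \emph{union} form of specification is not the route the paper takes (the paper explicitly assumes only the single-segment version), and as written it has the same defect: your final requirement $\tfrac{1}{S}\log(C_1\delta^\alpha)+\tfrac{S-1}{S}(\rho+\varepsilon)<-\tau$ still needs $\delta$ small \emph{relative to} $S(\delta)$, and the specification axiom says nothing about how fast $S(\delta)$ grows. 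Notice also that your argument never invokes the hypothesis $\rho+\tau<\tfrac{\alpha\theta}{2}$, which is a strong hint that a key ingredient is missing.

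The missing idea is essentially contained in your own discarded observation. You noted that at $j\approx n/2$ the shadowing is exponentially tight, $d(f^j(p),f^j(x_0))<\delta e^{-\theta n/2}$, but dismissed this because $f^j(x_0)$ need not be near $x_0$. The fix is simply to \emph{center the segment at $x_0$}: apply the single-segment specification to the point $f^{-n}(x_0)$ over the window of length $2n$. Then $\delta$ and $S=S(\delta)$ are fixed once and for all, the resulting periodic point $p_n$ has period $2n+S$, and at the midpoint $j=n$ one gets $d(p_n,x_0)<\delta e^{-\theta n}$. Now your own estimate gives
\[
\|A^{2n+S}(p_n)\|\leq \|A^{2n+S-1}(f(p_n))\|\cdot\|A(p_n)-A(x_0)\|\leq C_\varepsilon e^{(\rho+\varepsilon)(2n+S-1)}\cdot C_1\delta^\alpha e^{-\alpha\theta n},
\]
and combining with $-\tau\leq \gamma_1(A,\mu_{p_n})\leq \tfrac{1}{2n+S}\log\|A^{2n+S}(p_n)\|$ yields $-\tau\leq \rho+\varepsilon-\tfrac{\alpha\theta}{2}+\tfrac{\mathrm{const.}}{2n+S}$, a contradiction for large $n$ precisely because $\rho+\tau+\varepsilon<\tfrac{\alpha\theta}{2}$. (Minor point: for a periodic orbit one only has $\gamma_1(A,\mu_p)\leq \tfrac{1}{T}\log\|A^T(p)\|$ via the spectral-radius formula, not equality as you wrote; fortunately that is the inequality you need.)
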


\begin{proof}
Suppose there exists $x\in M$ so that $A(x)=0$. Given $n\in \N$, let $p_n\in M$ be a periodic point associated to $\delta$, $S$ and
$$\{f^{-n}(x),\ldots,f^{-1}(x),x,f(x),\ldots,f^n(x)\}$$
by the periodic exponential specification property. In particular, $f^{2n+S}(p_n)=p_n$ and
$$d (f^{j} ( f^{-n}(p_n)),f^j (f^{-n}(x)))<\delta e^{-\theta \min\lbrace j, 2n-j\rbrace} \text{ for all } j=0,1,\ldots,2n.$$

Fix $\varepsilon >0$ so that $\rho+\tau+\varepsilon <\frac{\alpha \theta}{2}$ and let $C_\varepsilon >0$ be given by Proposition \ref{prop: cresc} associated to it. We start observing that, since $A(x)=0$,
\begin{displaymath}
\begin{split}
\|A^{2n+S}(p_n)\|&=\|A^{2n+S}(p_n)-A^{2n+S-1}(f(p_n))A(x)\|\\
&\leq \|A^{2n+S-1}(f(p_n))\|\|A(p_n)-A(x)\|\\
&\leq C_\varepsilon e^{(\rho+\varepsilon) (2n+S-1)}C_1 d(p_n,x)^\alpha \\
& \leq C_\varepsilon e^{(\rho+\varepsilon) (2n+S-1)}C_1 \delta^\alpha e^{-\theta \alpha n} \leq \hat{C} e^{(\rho +\varepsilon - \frac{\theta \alpha}{2})(2n+S) }
\end{split}
\end{displaymath}
where $\hat{C}=C_\varepsilon C_1 \delta^\alpha e^{\theta \alpha S} >0$ is independent of $n$. Thus, denoting by $\mu_{p_n}$ the ergodic $f$-invariant measure supported on the orbit of $p_n$ it follows that
\begin{displaymath}
\begin{split}
-\tau &\leq \gamma_1 (A,\mu _{p_n})=\lim_{k\to +\infty}\frac{1}{k}\log \|A^{k}(p_n)\| \\
&=\lim_{k\to +\infty}\frac{1}{k(2n+S)}\log \|A^{k(2n+S)}(p_n)\| \\
&=\lim_{k\to +\infty}\frac{1}{k(2n+S)}\log \|A^{(2n+S)}(p_n)^k\| \\
&\leq \lim_{k\to +\infty}\frac{1}{k(2n+S)}\log \|A^{(2n+S)}(p_n)\|^k \\
&=\frac{1}{(2n+S)}\log \|A^{(2n+S)}(p_n)\| \\
&\leq \rho +\varepsilon - \frac{\theta \alpha}{2} + \frac{\log \hat{C}}{(2n+S)}.
\end{split}
\end{displaymath}
In particular, 
$$0\leq \rho +\tau +\varepsilon - \frac{\theta \alpha}{2} + \frac{\log \hat{C}}{(2n+S)}.$$
Therefore, since $\rho+\tau +\varepsilon - \frac{\theta \alpha}{2}<0$ and $\hat{C}$ is independent of $n$, we get a contradiction whenever $n\gg 0$ concluding the proof of the proposition
\end{proof}

\subsection{Conclusion of the proof} Let $f:M\to M$ and $A:M\to M(d,\R)$ be as in Theorem \ref{theo:main}. To complete the proof of our main result the idea is to apply Proposition \ref{prop: non-zero} to the cocycle induced by $(A,f)$ on a suitable exterior power. In order to do so, for every $i \in \{1,\ldots , d\}$, let $\Lambda ^i(\mathbb{R}^d)$ be the $i$th exterior power of $\mathbb{R}^d$ which is the space of alternate $i$-linear forms on the dual $(\mathbb{R}^d)^*$ and $\Lambda ^iA(x):\Lambda ^i(\mathbb{R}^d) \to \Lambda ^i(\mathbb{R}^d)$ be the linear map given by 
\begin{equation}\label{eq: def induced map}
\Lambda ^iA(x) (\omega): (\phi _1, \ldots , \phi _i)\to \omega (\phi _1\circ A(x),\ldots ,\phi _i\circ A(x))
\end{equation}
for $\omega \in \Lambda ^i(\mathbb{R}^d) $ and $\phi _j\in (\mathbb{R}^d)^*$.
Then, the cocycle induced by $(A,f)$ on the $i$th exterior power is just the cocyle generated by $x\to \Lambda ^iA(x)$ over $f$. A very well known fact about this cocycle (see for instance \cite[Section 4.3.2]{LLE}) is that its Lyapunov exponents are given by
\begin{equation*} 
\{\gamma _{j_1}(A,\mu) +\ldots +\gamma _{j_i}(A,\mu); \; 1\leq j_1<\ldots < j_i\leq d\}.
\end{equation*}
In particular, its largest Lyapunov exponent is given by $\gamma_1(A,\mu)+\gamma _2(A,\mu)+\ldots+\gamma _i(A,\mu)$. Applying this fact to the case when $i=d$ we get that the \emph{only} Lyapunov exponent of $(\Lambda ^dA,f)$ with respect to $\mu$ is
$$\gamma_1(\Lambda ^dA,\mu)=\gamma_1(A,\mu)+\gamma _2(A,\mu)+\ldots+\gamma _d(A,\mu).$$
In particular,
$$-\tau \leq \gamma_1(\Lambda ^dA,\mu)\leq \rho$$
for every ergodic $f$-invariant measure $\mu$ on $M$. Thus, applying Proposition \ref{prop: non-zero} to the cocycle $(\Lambda ^dA,f)$ we conclude that $\Lambda ^dA(x)\neq 0$ for every $x\in M$. From \eqref{eq: def induced map} it follows then that $\text{ker}(A(x))=\{0\}$ for every $x\in M$. Consequently, $A(x)\in GL(d,\R)$ for every $x\in M$ concluding the proof of our main result.  \qed

\begin{remark}
As one can easily see from the proof, we have not used the full strength of the periodic specification property. Indeed, the following property on $f$ suffices: suppose there exist constants $\theta,\delta ,c,S>0$ so that for any point $x\in M$ and $n\in \N$ there exists a periodic point $p_n\in M$ such that $f^{k_n}(p_n)=p_n$ where $k_n\leq cn+S$ satisfying  
$$d (p_n,x)<\delta e^{-\theta  n}.$$
In this case, assuming there are $\rho, \tau >0$ satisfying $\rho +\tau <\frac{\alpha\theta}{c}$ so that 
$$-\tau \leq \gamma _d(A,\mu)+ \ldots + \gamma_1(A,\mu)\leq \rho$$
for every ergodic $f$-invariant measure $\mu$ on $M$, we get that $A(x)\in GL(d,\R)$ for every $x\in M$. The proof is the same, mutatis mutandis, as the one presented above. 

Note that if $f$ satisfies the periodic specification property then it satisfies the previous property with $c=2$. We chose to present the main result in terms of the periodic specification property because it is more recurrent in the literature.
\end{remark}


\medskip{\bf Acknowledgements.} The author was partially supported by a CAPES-Brazil postdoctoral fellowship under Grant No. 88881.120218/2016-01 at the University of Chicago. 


\end{document}